\newtheorem{thm}{Theorem}[section]
\newtheorem{lemma}{Lemma}[section]
\theoremstyle{definition}
\newtheorem{defn}[thm]{Definition}
\theoremstyle{remark}
\newtheorem{obs}[thm]{Remark}
\newcommand{\R}{{\mathbb R}}
\newcommand{\N}{{\mathbb N}}
\newcommand{\noi}{\noindent}
\numberwithin{equation}{section}
\begin{document}
\title[Decay of Solutions ]
{
 REGULARITY AND DECAY OF GLOBAL SOLUTIONS FOR THE 4D NAVIER-STOKES EQUATIONS POSED ON SMOOTH DOMAINS.
}
\author{N. A. Larkin\;	\&
	M. V. Padilha$^\dag$}

\address
{
	Departamento de Matem\'atica, Universidade Estadual
	de Maring\'a, Av. Colombo 5790: Ag\^encia UEM, 87020-900, Maring\'a, PR, Brazil
}

\thanks
{
	$^\dag$
	Corresponding author \\
	Keywords: Navier-Stokes equations; Lipschitz and Smooth Domains; Decay in Bounded  Domains
}
\bigskip

\email{ nlarkine@uem.br;nlarkine@yahoo.com.br; marcos.padilha@ifpr.edu.br}
\date{}

\begin{abstract}We consider an initial-boundary value problem for the 4D  Navier-Stokes equations posed on bounded smooth domains. We prove the existence and uniqueness of regular solutions as well as their exponential decay and additional regularity properties have been established assuming restrictions on initial data.
\end{abstract}

\maketitle

\section{Introduction}\label{introduction}

In this work, we study an initial-boundary value problem for the 4D Navier-Stokes equations, 
\begin{align}\label{1-1}
	& u_t - \nu \Delta u  +(u\cdot \nabla)u + \nabla p = 0,\;\; \mbox{in} \;\; \Omega\times \R,\\
	&\nabla \cdot u=0 \;\;\mbox{in}\;\; \Omega,\;\;\;u|_{\partial \Omega}=0,\label{1-2}\\
	&u(x,0)=u_0(x),\label{1-3}
\end{align}

\noi where $x = (x_1,x_2,x_3,x_4) \in \Omega \subset \R^4$ and $\Omega$ is a bounded smooth domain.\\
Creation of the mathematical theory of the Navier-Stokes equations started with the famous Leray`s paper, see \cite{Leray}, where he studied  laminar (slow) and turbulent flows of the viscous liquid, defined weak and regular solutions to the three-dimensional Cauchy problem and proved the existence of a weak solution. This work oriented mathematicians to study existence, uniqueness and decay of the energy  of solutions to the multi-dimensional Navier-Stokes equations \cite{Salsa}. The most advanced results were established in the two-dimensional case, where the existence of weak and regular global solutions,  uniqueness and decay of the energy for bounded and unbounded Lipschitz domains were established without additional conditions of smallness for the initial data, see \cite{Lady},  Theorem 2, \cite{Hermann} Chapter V, Theorem 4.2.1,  \cite{Lions} Chapter I, Theorems 6.2, 6.3, 6.6; \cite{Temam} Chapter III, $\S 3.3$, Theorem 3.1; \cite{Larkin3} Theorems 4.2, 4.5, 4.6. The situation is more complicated for three dimensions: to prove uniqueness and regularity of global solutions, some authors considered small initial data, see
\cite{Lady}, Theorem 1,  Theorem 3 ; \cite{Lions}, Chapter I, Theorem 6.6; \cite{Temam}, Chapter III, Theorems 3.2, 3.5, 3.6; while the others studied initial boundary value problems in  ``thin" domains, see  \cite{Raugel}. In \cite{Larkin3D}, regularity, uniqueness and exponential decay of the energy has been proved making use of connection between the geometric sizes of a domain and initial data  without involving the first eigenvalue of the projector operator of the Laplacian that was for a long time the standard approach (see  \cite{Fo3}).

The existence of weak and strong global solutions to (\ref{1-1})-(\ref{1-3}) for a dimension $n \leq  4$ have been considered in \cite{Lions}, Chapter I, Theorem 6.8; \cite{Temam}, Chapter III, $\S 3.1$ under assumptions of small initial data and the continuity of solution was studied unders conditions \cite{Scheffer}.  Recent works approach the criticality of the dimension 4 and establish criteria over spaces or domains. In \cite{Li} it is shown that for dimensions bigger than three, blow up for some finite time. In \cite{Larkin4D}, the existence, uniqueness and exponential decay of global strong solution to (1.1)-(1.3) posed on Lipschitz bounded and unbounded domains have been established assuming some connections between the geometric sizes of domains and initial data. \\ 
Our goal here is to consider the 4D Navier-Stokes system posed on smooth domains. Earlier, the problem (1.1)-(1.3) has been considered posed on parallelepipeds, bounded and unbounded \cite{Larkin4D} and it was revealed  that the fourth dimension is a critical one in the sense that the same method can not be applied  to prove the existence of a solution in the $H^2$-norm. Alternatively, our approach, based on a special basis, allows to prove the existence and uniqueness of regular solutions as well as their exponential decay.

\section{Notations and Auxiliary Facts}

Let $\Omega$ be a domain in $\R^4$ and $x = (x_1, x_2, x_3, x_4) \in \Omega$. We use the standard notations of Sobolev spaces $W^{k,p}$, $L^p$ and $H^k$ for vector functions and the following notations for the norms \cite{Evans, Lions, Temam}

(i) For vector functions $u(x,t) = (u_1(x,t), u_2(x,t), u_3(x,t), u_4(x,t)),$

$$\| u \|^2 = \sum_{i=1}^4\int_{\Omega}  | u_i |^2  dx,\;
\| u \|_{L^p(\Omega)}^p = \sum_{i=1}^4\int_{\Omega}  | u_i |^p  dx ,$$$$
\| u \|_{W^{k,p}(\Omega)}^p = \sum_{0 \leq \alpha \leq k} \sum_{i=1}^4 \|D^\alpha u_1 \|_{L^p(\Omega)}^p ,$$

When $p = 2$, $W^{k,p}(\Omega) = H^k(\Omega)$ is a Hilbert space with the scalar product

$$((u,v))_{H^k(\Omega)}=\sum_{|j|\leq k}(D^ju,D^jv),\;
\|u\|_{L^{\infty}(\Omega)}=\text{ess}\; \text{sup}_{\Omega}|u(x)|.$$

We use the $H_0^k(\Omega)$ to represent the closure of $C_0^\infty(\Omega)$, it is, the set of all $C^\infty$ functions with compact support in $\Omega$, with the norm of $H^k(\Omega)$ \cite{Lions}.

(ii) For scalar functions $f(x,t)$

$$\| f \|^2 = \int_{\Omega} | f |^2d\Omega, \hspace{1cm} \| f \|_{L^p(\Omega)}^p = \int_{\Omega} | f  |^p\, d\Omega,$$
$$\| f \|_{W^{k,p}(\Omega)}^p = \sum_{0 \leq \alpha \leq k} \|D^\alpha f \|_{L^p(\Omega)}^p, \hspace{1cm} \| f \|_{H^k(\Omega)} = \| f \|_{W^{k,2}(\Omega)}^2.$$

Define the auxiliary spaces: (see \cite{Temam})

$$D(\Omega) = \{ f \in C^\infty(\Omega); \text{supp f is a compact set of }\Omega\}$$

$$\mathcal{V} = \{ u \in \mathcal{D}(\Omega), \nabla \cdot u = 0 \},$$

$$V = \text{ the closure of } \mathcal{V} \text { in } H_0^1(\Omega),$$

\noi Obviously, $V$ is a subspace of $H_0^1(\Omega)$. 

$$H = \text{ the closure of } \mathcal{V} \text { in } L^2(\Omega).$$

\noi The space $H$ is eqquiped with the natural $L^2$ inner product. The space $V$ will be equipped with the scalar produt 

$$((u,v)) = \sum_{i =1}^4 (D_i u, D_i v)$$

\noi when $\Omega$ is bounded. If $\Omega$ is unbounded, we define the inner product as the sum of the inner products as following:

$$[[u,v]] = (u,v) + ((u,v)).$$

The next Lemmas will be used in estimates:

\begin{lemma}\label{steklov}[Steklov's Inequality \cite{steklov}] Let $v \in H^1_0(0,L).$ Then
	\begin{equation}\label{Estek}
	\frac {\pi^2}{L^2}\|v\|^2(t) \leq \|v_x\|^2(t).
	\end{equation}
\end{lemma}

\begin{proof}
Let $v(t)\in H^1_0(0,\pi)$, then by the Fourier series ,  $$\int_0^{\pi}v_t^2(t)\,dt\geq\int_0^{\pi}v^2(t)\,dt.$$
Inequality \eqref{Estek} follows by a simple scaling.
\end{proof}

\begin{lemma}[See \cite{Lady}; Theorems 7.1,-7.3; \cite{Magenes}, Theorem 2.2.] \label{lemma3.3}  Let $v \in H_0^1(\Omega)$ and $n = 4$, then
	
	\begin{equation}
	\| v \|_{L^4(\Omega)} \leq 3 \| \nabla v \|_{L^2(\Omega)}.
	\end{equation}
 If $v\in H^1(\Omega),$ then
 \begin{equation}
 	\|v\|_{L^4(\Omega)}\leq C_1(\Omega)\|\nabla v\|_{L^2(\Omega)}.
 \end{equation}
\end{lemma}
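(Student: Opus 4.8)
The plan is to derive both inequalities from the one-parameter Gagliardo--Nirenberg--Sobolev (GNS) inequality, using that $2^{*}=2n/(n-2)=4$ is exactly the critical Sobolev exponent when $n=4$, so that $H^{1}\hookrightarrow L^{4}$ is the borderline but still valid embedding. For the first inequality I would first reduce to $u\in C_{0}^{\infty}(\Omega)$ by density in $H_{0}^{1}(\Omega)$, extend $u$ by zero to all of $\R^{4}$, and invoke the product form of the $L^{1}$-GNS inequality,
\begin{equation}\label{gns}
\|u\|_{L^{4/3}(\R^{4})}\leq\prod_{i=1}^{4}\Big\|\partial_{i}u\Big\|_{L^{1}(\R^{4})}^{1/4},
\end{equation}
which carries no multiplicative constant.

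The key step is to apply \eqref{gns} not to $u$ but to $|u|^{3}$, so that the left-hand side becomes $\|u\|_{L^{4}}^{3}$. Using the pointwise bound $|\partial_{i}|u|^{3}|\leq 3|u|^{2}|\partial_{i}u|$ together with the Cauchy--Schwarz inequality $\int_{\R^4} |u|^{2}|\partial_{i}u|\,dx\leq\|u\|_{L^{4}}^{2}\|\partial_{i}u\|_{L^{2}}$, inequality \eqref{gns} turns into
\begin{equation}\label{gns2}
\|u\|_{L^{4}}^{3}\leq 3\,\|u\|_{L^{4}}^{2}\prod_{i=1}^{4}\|\partial_{i}u\|_{L^{2}}^{1/4}.
\end{equation}
Dividing by $\|u\|_{L^{4}}^{2}$ (the case $u\equiv 0$ being trivial) and bounding the geometric mean of the directional derivatives by the crude estimate $\prod_{i=1}^{4}\|\partial_{i}u\|_{L^{2}}^{1/4}\leq\max_{i}\|\partial_{i}u\|_{L^{2}}\leq\|\nabla u\|_{L^{2}}$ yields exactly the constant $3$ asserted in the first inequality; a density argument then removes the smoothness assumption.

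For the second inequality I would pass from $\R^{4}$ to the bounded smooth domain $\Omega$ by a standard extension operator $E\colon H^{1}(\Omega)\to H^{1}(\R^{4})$, which exists precisely because $\partial\Omega$ is smooth, with $\|Ev\|_{H^{1}(\R^{4})}\leq c(\Omega)\|v\|_{H^{1}(\Omega)}$. Applying the whole-space embedding to $Ev$ and restricting back to $\Omega$ gives $\|v\|_{L^{4}(\Omega)}\leq C_{1}(\Omega)\|v\|_{H^{1}(\Omega)}$; the gradient-only form stated in the lemma is then recovered on the subspaces actually used in the sequel (divergence-free fields subject to the boundary condition of \eqref{1-2}) by a Poincar\'e inequality, since for nonzero constants the pure-gradient bound would otherwise fail. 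I expect the main obstacle to be the bookkeeping of constants: the passage through $|u|^{3}$ and the combination of the chain-rule, Cauchy--Schwarz and AM--GM steps must be arranged so that no factor larger than $3$ survives, while for the domain case the constant $C_{1}(\Omega)$ is not explicit but inherited from the norm of the extension operator.
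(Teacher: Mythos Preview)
The paper does not actually supply a proof of this lemma; it merely cites Ladyzhenskaya \cite{Lady} and Lions--Magenes \cite{Magenes}, and the only follow-up is Remark~2.1 recording that $C_{1}(\Omega)$ depends on the domain. Your argument for the first inequality---extend by zero, apply the constant-free Gagliardo--Nirenberg product inequality to $|u|^{3}$, then Cauchy--Schwarz and the geometric--mean bound---is precisely the classical derivation (essentially what one finds in the cited sources), and it delivers the constant~$3$ cleanly. So for part one there is nothing to contrast: your route is the standard one and is correct.

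For the second inequality you correctly flag the real issue: as literally stated, $\|v\|_{L^{4}(\Omega)}\leq C_{1}(\Omega)\|\nabla v\|_{L^{2}(\Omega)}$ fails on nonzero constants, and what the extension argument actually yields is $\|v\|_{L^{4}(\Omega)}\leq C_{1}(\Omega)\|v\|_{H^{1}(\Omega)}$. In the paper the inequality is invoked only on $\nabla u_{m}$ with $u_{m}\in V$ (see the line leading to estimate (3.28)), and in that context one may absorb the lower-order term via $\|\nabla u_{m}\|\leq\|u_{m}\|_{H^{2}}\leq C\|Au_{m}\|$, so the application goes through even though the lemma is imprecisely stated. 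Your diagnosis is accurate; just be aware that the defect lies in the paper's formulation rather than in your method, and that the fix you propose (extension plus control of the $L^{2}$ part on the relevant subspace) is exactly what is implicitly being used downstream.
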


\begin{obs}The constant $C_1(\Omega)$ in Lemma 2.2 depends on $\Omega$.
\end{obs}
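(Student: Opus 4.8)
The plan is to explain why the second inequality of Lemma \ref{lemma3.3}, asserted on all of $H^1(\Omega)$, cannot carry a universal constant, in sharp contrast to the first inequality on $H^1_0(\Omega)$. First I would recall the mechanism behind the absolute constant in the $H^1_0$ case. For $v \in H^1_0(\Omega)$ the trivial extension $\tilde v$ (by zero outside $\Omega$) lies in $H^1(\R^4)$ with $\|\nabla \tilde v\|_{L^2(\R^4)} = \|\nabla v\|_{L^2(\Omega)}$ and $\|\tilde v\|_{L^4(\R^4)} = \|v\|_{L^4(\Omega)}$. Since $4$ is exactly the critical Sobolev exponent $2^{*}=2n/(n-2)$ in dimension $n=4$, the Gagliardo--Nirenberg--Sobolev inequality $\|w\|_{L^4(\R^4)} \le C_4 \|\nabla w\|_{L^2(\R^4)}$ holds with a purely dimensional $C_4$; restricting back to $\Omega$ yields a bound independent of the domain. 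This is precisely why the first estimate is stated with an explicit numerical constant.

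For $v \in H^1(\Omega)$ the zero extension is no longer admissible, since it produces a jump across $\partial\Omega$ and fails to lie in $H^1(\R^4)$. One must instead invoke a bounded linear extension operator $E \colon H^1(\Omega) \to H^1(\R^4)$, whose existence uses the smoothness of $\partial\Omega$ and whose operator norm $\|E\|$ encodes the geometry of the boundary. Chaining $\|v\|_{L^4(\Omega)} \le \|Ev\|_{L^4(\R^4)} \le C_4\|\nabla(Ev)\|_{L^2(\R^4)} \le C_4\|E\|\,\|v\|_{H^1(\Omega)}$ produces a constant $C_1(\Omega)=C_4\|E\|$ that inherits its $\Omega$-dependence through $\|E\|$.

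To show that this dependence is genuine --- that no constant uniform over all smooth bounded domains can work --- I would exhibit a dumbbell family $\Omega_\eps$ consisting of two fixed unit balls joined by a thin cylindrical neck of cross-sectional radius $\eps$ and fixed length $\ell$. Taking $v_\eps \equiv 1$ on one ball, $v_\eps\equiv -1$ on the other, and interpolating linearly through the neck so that $\int_{\Omega_\eps} v_\eps \approx 0$, the $L^4$-norm stays bounded below by the volume of the balls, while $\nabla v_\eps$ is supported only on the neck, whose $4$-volume is of order $\eps^3\ell$; hence $\|\nabla v_\eps\|_{L^2}^2 = O(\eps^3)\to 0$. Thus the ratio $\|v_\eps\|_{L^4}/\|\nabla v_\eps\|_{L^2}$ blows up, forcing the optimal constant $C_1(\Omega_\eps)\to\infty$ and proving that $C_1$ must depend on $\Omega$.

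The point I expect to be the main obstacle is that a naive scaling argument is powerless here: under a dilation $x\mapsto\lambda x$ in $\R^4$ both $\|v\|_{L^4}$ and $\|\nabla v\|_{L^2}$ scale by the same factor $\lambda$, so the inequality is scale-invariant and cannot distinguish $\Omega$ from $\lambda\Omega$. The $\Omega$-dependence is therefore a dependence on \emph{shape} rather than size, and must be detected by a shape-varying family such as the dumbbell rather than by dilations. A secondary subtlety is that, because constant functions lie in the kernel of the gradient, the $H^1$ estimate is meaningful only after a normalization (mean-zero, or the full $H^1$-norm on the right); the dumbbell test function is chosen with vanishing mean precisely to respect this.
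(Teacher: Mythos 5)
The paper offers no proof of this remark at all --- it is stated as a bare observation after Lemma 2.2 --- so there is no argument of the authors' to match yours against; what you have written is a complete justification, and in fact more than the remark literally asks for. Your three-step structure is sound: (i) the zero-extension argument correctly explains why the $H^1_0$ inequality carries a purely dimensional constant, since $4=2n/(n-2)$ is exactly the critical exponent in $n=4$ and the trivial extension is an isometry for both norms involved; (ii) the extension-operator chain $\|v\|_{L^4(\Omega)}\le C_4\|E\|\,\|v\|_{H^1(\Omega)}$ correctly locates the source of the $\Omega$-dependence in $\|E\|$; and (iii) the dumbbell family genuinely proves the dependence cannot be removed, with the arithmetic right in $\R^4$ (the neck cross-section is a $3$-ball of volume $\sim\varepsilon^3$, the interpolating gradient is $O(1/\ell)$, so $\|\nabla v_\varepsilon\|^2_{L^2}=O(\varepsilon^3)\to 0$ while $\|v_\varepsilon\|_{L^4}$ stays bounded below). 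Your scale-invariance observation is also correct and explains why the dependence is on shape, not size.

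One point deserves emphasis, because it is actually a defect of the paper that your argument exposes: as printed, the second inequality of Lemma 2.2, $\|v\|_{L^4(\Omega)}\le C_1(\Omega)\|\nabla v\|_{L^2(\Omega)}$ for all $v\in H^1(\Omega)$, is false for nonzero constant functions, and your extension-operator chain honestly produces the full $H^1(\Omega)$-norm on the right rather than the gradient alone. You flag this normalization issue yourself, and your dumbbell functions are chosen mean-zero precisely to show that even on the corrected statement (mean-zero $v$, gradient on the right) the optimal constant blows up along a family of smooth bounded domains. So your proof establishes the remark in its strongest reasonable reading, whereas the paper asserts it without proof and on top of a lemma whose statement needs the amendment you identify.
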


\begin{lemma}[See: \cite{Temam}]\label{lemmabuvw}Let $b(u,v,w) = ((u \cdot \nabla)v,w),$ and $n= 4$, then 
	$$|b(u,v,w)| \leq 9 \|u\|_V \|v\|_V  \|w\|_V.$$
	
\end{lemma}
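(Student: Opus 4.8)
The plan is to expand the trilinear form componentwise, reduce the pointwise product to a form amenable to Hölder's inequality, and then invoke the Sobolev embedding recorded in Lemma \ref{lemma3.3}. Writing $b(u,v,w)=\sum_{i,j=1}^4\int_\Omega u_i(\partial_i v_j)w_j\,dx$, I would first bound $|b(u,v,w)|\le\sum_{i,j=1}^4\int_\Omega|u_i|\,|\partial_i v_j|\,|w_j|\,dx$ and apply Hölder's inequality to each term with the exponents $(4,2,4)$, which satisfy $\tfrac14+\tfrac12+\tfrac14=1$. This yields $\int_\Omega|u_i|\,|\partial_i v_j|\,|w_j|\,dx\le\|u_i\|_{L^4(\Omega)}\,\|\partial_i v_j\|_{L^2(\Omega)}\,\|w_j\|_{L^4(\Omega)}$, placing the derivative factor in $L^2$ and the two undifferentiated factors in $L^4$, exactly the borderline integrability that makes $n=4$ critical.

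Next I would control the $L^4$ norms by gradients. Since each component $u_i,w_j\in H_0^1(\Omega)$, Lemma \ref{lemma3.3} gives $\|u_i\|_{L^4(\Omega)}\le 3\|\nabla u_i\|_{L^2(\Omega)}$ and likewise for $w_j$. The constant $9=3^2$ should then emerge from applying this bound to both the $u$ and the $w$ factors, while $v$ contributes only its gradient in $L^2$. To assemble the double sum I would use the Cauchy--Schwarz inequality twice: first summing over $i$ for fixed $j$, pairing $(\|u_i\|_{L^4})_i$ with $(\|\partial_i v_j\|_{L^2})_i$ and using $\sum_i\|u_i\|_{L^4}^2\le 9\sum_i\|\nabla u_i\|_{L^2}^2=9\|u\|_V^2$; then summing over $j$, pairing $(\|w_j\|_{L^4})_j$ with $(\|\nabla v_j\|_{L^2})_j$ and using $\sum_j\|w_j\|_{L^4}^2\le 9\|w\|_V^2$ together with $\sum_j\|\nabla v_j\|_{L^2}^2=\|v\|_V^2$. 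Collecting the constants produces $3\cdot 3=9$ and the bound $|b(u,v,w)|\le 9\|u\|_V\|v\|_V\|w\|_V$.

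The main point requiring care is the order in which Cauchy--Schwarz is applied to the double sum: it must be arranged so that the gradient norms of $v$ recombine exactly into $\|v\|_V=\bigl(\sum_{i,j}\|\partial_i v_j\|_{L^2}^2\bigr)^{1/2}$ without generating spurious dimensional factors, and so that the embedding constant $3$ is used precisely twice rather than once per summand. An equivalent, slightly slicker route would be to first estimate the pointwise inner product by $|((u\cdot\nabla)v)\cdot w|\le|u|\,|\nabla v|\,|w|$, with $|u|,|w|$ the Euclidean norms and $|\nabla v|$ the Frobenius norm, then apply Hölder $(4,2,4)$ to the scalar functions $|u|,|\nabla v|,|w|$ and Lemma \ref{lemma3.3} to $|u|,|w|\in H_0^1(\Omega)$; here one must additionally justify $\|\nabla|u|\|_{L^2}\le\|u\|_V$, which follows from the chain rule and Cauchy--Schwarz. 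Either way the decisive ingredient is the $n=4$ embedding $H_0^1\hookrightarrow L^4$ with the explicit constant $3$ supplied by Lemma \ref{lemma3.3}.
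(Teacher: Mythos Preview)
Your proposal is correct and follows essentially the same route as the paper: expand $b$ componentwise, apply H\"older with exponents $(4,2,4)$ to put $u$ and $w$ in $L^4$ and $\nabla v$ in $L^2$, then invoke Lemma~\ref{lemma3.3} twice to produce the constant $9=3\cdot 3$. Your handling of the double sum via two Cauchy--Schwarz steps is in fact more carefully written than the paper's own intermediate inequality.
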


\begin{proof}By the H\"{o}lder Inequality, we calculate
	
	\begin{align*}|b(u,v,w)| \leq \sum_{i,j =1}^4 \int_\Omega u_i D_i v_j w_j\, dx 
		\leq \sum_{i,j=1}^4 \| u_i \|_{L^4(\Omega)} \| D_i v_j \| \| w_j \|_{L^4(\Omega)}\\
		\leq \Big(\sum_{i=1}^4 \| u_{i} \|_{L^4(\Omega)}\Big)^{1/4} \Big( \sum_{i,j = 1}^4 \|D_i v_j \|^2 \Big)^{1/2} \Big( \sum_{j = 1}^4 \| w_j \|_{L^4(\Omega)}^2\Big)^{1/4}\\
		\leq \|u \|_{L^4(\Omega)} \|v\|_V \| w \|_{L^4(\Omega)}
		\leq 9 \|u\|_V \|v\|_V  \|w\|_V.
	\end{align*}

The proof of Lemma 2.3 is complete.

\end{proof}

\begin{lemma}[See: \cite{Cattabriga}]\label{Cattabriga}
	
	Let $\Omega$ be a bounded set of  class $C^2$. Let us suppose that
	
	\begin{equation}
		\label{catta1}u \in W^{1,\alpha}(\Omega), \hspace{5mm} p \in L^\alpha(\Omega), \hspace{5mm} 1 < \alpha < +\infty
	\end{equation}
	
	\noi are solutions of the generalized Stokes problem:
	\begin{equation}\begin{cases}
			-\nu\Delta u + \nabla p = f, \text{ in } \Omega,\\
			\nabla \cdot u = 0, \text { in } \Omega,\\
			u = 0 \text{ on } \partial \Omega.
	\end{cases}\end{equation}
	
	If $f \in L^{\alpha}(\Omega)$,  then $u \in W^{2,\alpha}(\Omega)$, $p \in W^{1, \alpha}(\Omega)$ and there is a constant $C(\alpha, \nu,\Omega)$ such that
	\begin{align}&\| u \|_{W^{2,\alpha}(\Omega)} + \| p \|_{W^{1, \alpha}(\Omega)} \leq C \big\{ \| f\|_{L^{\alpha}(\Omega)}      \big\}.
	\end{align}

\end{lemma}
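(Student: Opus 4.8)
The plan is to establish the estimate by the classical method of Calder\'on--Zygmund singular integrals combined with a localization (freezing-the-boundary) argument, exactly as in the elliptic $L^p$ theory of Agmon--Douglis--Nirenberg adapted to the Stokes system. First I would reduce the global estimate to two model problems: the whole-space problem and the flat half-space problem. Since $\partial\Omega$ is of class $C^2$, I would cover $\overline{\Omega}$ by finitely many balls, choose a subordinate partition of unity $\{\varphi_k\}$, and in each boundary chart use a $C^2$ diffeomorphism $\Phi_k$ that flattens $\partial\Omega$ onto a piece of the hyperplane $\{x_4 = 0\}$. Writing $u_k = \varphi_k u$, the localized pair $(u_k,\varphi_k p)$ solves a Stokes-type system whose right-hand side is $\varphi_k f$ plus lower-order terms involving $\nabla\varphi_k$, $\Delta\varphi_k$ and, after the change of variables, the bounded coefficients coming from $D\Phi_k$ and $D^2\Phi_k$.

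For the interior (whole-space) model I would use the fundamental solution of the Stokes operator: the Oseen tensor $E_{ij}$ together with the pressure kernel $q_j$. In dimension $n=4$ these are homogeneous of degree $-(n-2)=-2$ and $-(n-1)=-3$ respectively. The solution is represented by convolution $u = E*f$ and $p = \sum_j q_j * f_j$, so that $D^2 u$ and $\nabla p$ are given by convolution against kernels homogeneous of degree $-n=-4$ with vanishing mean over spheres, i.e.\ genuine Calder\'on--Zygmund kernels. The Calder\'on--Zygmund theorem then yields $\|D^2 u\|_{L^\alpha} + \|\nabla p\|_{L^\alpha} \le C\|f\|_{L^\alpha}$ for every $1<\alpha<\infty$, which is the content of the estimate away from $\partial\Omega$.

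The half-space model is where the real work lies: one must control a solution of $-\nu\Delta u + \nabla p = f$, $\nabla\cdot u = 0$ in $\R^4_+ = \{x_4 > 0\}$ with $u = 0$ on $\{x_4 = 0\}$, and obtain the same $L^\alpha$ bound. Here I would represent the localized solution through the explicit Green's matrix for the half-space (equivalently, by tangential Fourier transform and solving the resulting system of ordinary differential equations in $x_4$, or by a reflection that respects the divergence constraint), using uniqueness to identify the given solution with the potential representation. The resulting kernels are again of Calder\'on--Zygmund type up to the boundary, but now the Dirichlet condition couples the velocity components through the pressure trace, so verifying the cancellation properties of the boundary kernels is delicate. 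I expect this half-space estimate, rather than the interior one, to be the main obstacle.

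Finally I would reassemble the global estimate. Summing over the charts and transporting back by $\Phi_k^{-1}$ produces the desired bound modulo the lower-order quantities $\|\nabla u\|_{L^\alpha}$, $\|u\|_{L^\alpha}$ and $\|p\|_{L^\alpha}$ generated by the cutoffs and the $C^2$ change of variables. These intermediate norms are absorbed using the interpolation inequality $\|\nabla u\|_{L^\alpha} \le \eps\|D^2 u\|_{L^\alpha} + C_\eps\|u\|_{L^\alpha}$ together with the a priori control of the lowest-order norms coming from the weak formulation and the normalization of $p$ (fixing its mean value, since $p$ is determined only up to a constant). Choosing $\eps$ small closes the estimate and gives $\|u\|_{W^{2,\alpha}(\Om)} + \|p\|_{W^{1,\alpha}(\Om)} \le C\|f\|_{L^{\alpha}(\Om)}$ with $C = C(\alpha,\nu,\Om)$.
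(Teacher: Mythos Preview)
Your outline is a faithful sketch of the Agmon--Douglis--Nirenberg/Cattabriga argument and would, if carried out, prove the lemma. The one place I would tighten is the last absorption step: interpolation lets you swallow $\|\nabla u\|_{L^\alpha}$ into $\eps\|D^2 u\|_{L^\alpha}+C_\eps\|u\|_{L^\alpha}$, but you still have $\|u\|_{L^\alpha}$ and $\|p-\bar p\|_{L^\alpha}$ left on the right-hand side. In the actual proof these are removed either by a contradiction--compactness argument (using uniqueness for the homogeneous Stokes problem on the bounded $C^2$ domain) or, equivalently, by first establishing the a priori inequality with those lower-order terms and then invoking the Rellich embedding. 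Saying they come ``from the weak formulation'' is not quite enough; you should make explicit which of these two routes you take.

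That said, the paper does not prove this lemma at all: it is quoted verbatim from Cattabriga's 1961 paper (and its $W^{m,\alpha}$ extension from Temam, Proposition~2.2), with no argument supplied. So your proposal is not a different route to the paper's proof; it is the proof the paper deliberately outsources. What you gain by writing it out is self-containment and an explicit dependence of $C$ on $(\alpha,\nu,\Omega)$; what the paper gains by citing is brevity, since the result is classical and only used as a black box (once to pass from $Au\in L^2$ to $u\in H^2$, and once in the $H^3$ step).
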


\begin{lemma}[See: \cite{Temam}, Proposition 2.2.]
	
	Let $\Omega$ be an open bounded set of  class $C^r; r=max(m,2), m >0\;\text{is integer} $. Suppose that
	
	\begin{equation}
		\label{catta1}u \in W^{2,\alpha}(\Omega), \hspace{5mm} p \in W^{1,\alpha}(\Omega), \hspace{5mm} 1 < \alpha < +\infty
	\end{equation}
	
\noi are solutions to the generalized Stokes problem:
		\begin{equation}\begin{cases}
			-\nu\Delta u + \nabla p = f, \text{ in } \Omega,\\
			\nabla \cdot u = 0, \text { in } \Omega,\\
			u = 0 \text{ on } \partial \Omega.
	\end{cases}\end{equation}
	
	If $f \in W^{m,\alpha}(\Omega)$,  then $u \in W^{m+2,\alpha}(\Omega)$, $p \in W^{m+1, \alpha}(\Omega)$ and there is a constant $C(\alpha, \nu,\Omega)$ such that
		\begin{align}&\| u \|_{W^{m+2,\alpha}(\Omega)} + \| p \|_{W^{m+1, \alpha}(\Omega)} \leq C \big\{ \| f\|_{W^{m,\alpha}(\Omega)}      \big\}.
	\end{align}

\end{lemma}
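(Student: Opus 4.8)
The plan is to argue by induction on the integer $m$, with Lemma \ref{Cattabriga} serving as the base case. For $m=0$ the hypotheses reduce to $f\in L^\alpha(\Omega)$, $u\in W^{1,\alpha}(\Omega)$, $p\in L^\alpha(\Omega)$, which is exactly the setting of Lemma \ref{Cattabriga}; that lemma already gives $u\in W^{2,\alpha}(\Omega)$, $p\in W^{1,\alpha}(\Omega)$ and the estimate with $C\|f\|_{L^\alpha(\Omega)}$ on the right. Assuming the statement up to order $m-1$, I would take $f\in W^{m,\alpha}(\Omega)\subset W^{m-1,\alpha}(\Omega)$; the inductive hypothesis then already furnishes $u\in W^{m+1,\alpha}(\Omega)$, $p\in W^{m,\alpha}(\Omega)$ with the corresponding bound, so the whole task reduces to gaining one additional derivative for $u$ and for $p$.

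To gain that derivative I would differentiate the system. Formally, for any first-order derivative $D_k$ the pair $(D_k u, D_k p)$ satisfies
\begin{equation*}
-\nu\Delta(D_k u)+\nabla(D_k p)=D_k f,\qquad \nabla\cdot(D_k u)=0\quad\text{in }\Omega,
\end{equation*}
with $D_k f\in W^{m-1,\alpha}(\Omega)$. The obstruction is that $D_k u$ does not in general vanish on $\partial\Omega$, so this differentiated system is directly exploitable only away from the boundary. I would therefore fix a finite cover of $\overline{\Omega}$ by balls together with a subordinate partition of unity, splitting the estimate into interior and boundary contributions. On an interior ball I would localize $D_k u$ with a cutoff $\chi$; since localization destroys the divergence constraint ($\nabla\cdot(\chi D_k u)=\nabla\chi\cdot D_k u$), one first corrects it by a Bogovskii-type field solving $\nabla\cdot w=\nabla\chi\cdot D_k u$ and then applies Lemma \ref{Cattabriga}, the commutator terms $[\Delta,\chi]u$, $(\nabla\chi)\,p$ and the correction being of lower order and hence already controlled by the inductive hypothesis.

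The main obstacle is the boundary estimate. Near a boundary point I would invoke the $C^r$ regularity of $\partial\Omega$ with $r=\max(m,2)$ — precisely the hypothesis on $\Omega$ — to flatten the boundary by a local diffeomorphism, reducing to a half-ball whose flat face lies in $\{x_4=0\}$; this turns the constant-coefficient operators into variable-coefficient ones with $W^{m,\alpha}$ coefficients. In these coordinates the tangential derivatives $D_1,D_2,D_3$ preserve the condition $u=0$ on the flat face, so tangential difference quotients of $u$ again solve a Stokes-type system with homogeneous boundary data and a right-hand side bounded in $W^{m-1,\alpha}$ (the coefficient commutators being handled by induction); this controls every derivative $D^\beta u$ and $D^\beta p$ of the required order except those carrying two or more derivatives in the normal direction $x_4$. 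Recovering these purely normal derivatives algebraically is the technical heart of the argument: from the $i$-th component of $-\nu\Delta u+\nabla p=f$ one solves for $D_4^2 u_i$ in terms of $f_i$, the tangential second derivatives of $u_i$, and $D_i p$, while differentiating $\nabla\cdot u=0$ in $x_4$ together with the component equations yields $D_4 p$ and $D_4^2 u_4$; iterating this bookkeeping produces all the missing derivatives without loss of order. One must keep the diffeomorphism of class $C^r$ so that the transported coefficients lie in $W^{m,\alpha}$ and no regularity is lost under the change of variables.

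Finally I would reassemble the local interior and boundary bounds through the partition of unity, sum the resulting inequalities, and absorb the lower-order contributions into the left-hand side by means of the order-$(m-1)$ estimate. This yields $u\in W^{m+2,\alpha}(\Omega)$, $p\in W^{m+1,\alpha}(\Omega)$ and
\begin{equation*}
\|u\|_{W^{m+2,\alpha}(\Omega)}+\|p\|_{W^{m+1,\alpha}(\Omega)}\le C(\alpha,\nu,\Omega)\,\|f\|_{W^{m,\alpha}(\Omega)},
\end{equation*}
closing the induction and completing the proof.
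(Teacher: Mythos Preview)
The paper does not prove this lemma; it is stated with a citation to \cite{Temam}, Proposition 2.2, and no argument is given. So there is no ``paper's own proof'' to compare against --- the authors treat the result as known and simply invoke it later when upgrading $Au\in L^2(\R^+;H^1(\Omega))$ to $u\in L^2(\R^+;H^3(\Omega))$.

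Your sketch is the standard bootstrap that underlies the cited result: induction on $m$ anchored at Cattabriga's $m=0$ estimate, interior localization with a Bogovski\u{\i} correction to restore the divergence constraint, boundary flattening via a $C^r$ chart, tangential difference quotients to control mixed derivatives, and finally the algebraic recovery of purely normal derivatives from the equations $-\nu\Delta u+\nabla p=f$ and $\nabla\cdot u=0$. This is essentially the route taken in \cite{Temam} (and in the ADN framework behind \cite{Cattabriga}), so your proposal is a correct and faithful expansion of what the paper merely cites. One small remark: the lemma as stated assumes $m>0$, so your induction really starts at $m=1$ with Lemma \ref{Cattabriga} serving as the \emph{anchor} rather than the literal first case; also, after flattening, the transformed system is a Stokes-type system with variable (but $C^{r-1}$) principal coefficients, and you should say explicitly that the base estimate of Lemma \ref{Cattabriga} still applies in that perturbed setting (e.g.\ by freezing coefficients or by the a priori estimates in \cite{Cattabriga} for smooth domains), since that is where the hypothesis $r=\max(m,2)$ is actually consumed.
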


\begin{defn}An open subset $\Omega$ is said to satisfy the {\bf Cone Condition} if there exists a finite cone $C$ such that each $x \in \Omega$ is the vertex of a finite cone $C_x \subset \Omega$ and congruent to $C$. 
\end{defn}

\begin{lemma}[Sobolev Embeddings. See \cite{Brezis}] Let $\Omega \subset \R^4$ be a domain which satisfies the cone condition, we have the following embeddings.

\begin{enumerate}
	\item[(i)] If $mp < 4$, then

$$W^{m,p}(\Omega) \hookrightarrow L^q(\Omega),$$

\noi for $p\leq q \leq kp / (4-mp)$.

\item[(ii)]If $mp = 4$, then 

$$W^{m,p}(\Omega) \hookrightarrow L^q(\Omega),$$

\noi for $p \leq q < \infty$.

\end{enumerate}

	\end{lemma}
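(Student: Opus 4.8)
The plan is to reduce the statement to a potential-theoretic estimate on a single cone and then recover the general domain by a covering argument. Since every $x \in \Omega$ is the vertex of a finite cone $C_x$ congruent to a fixed cone $C$, it suffices to prove the imbedding on $C$ with a constant depending only on $C$, and afterwards to sum the local estimates over a covering of $\Omega$ by such cones having bounded overlap. If one is content to strengthen the hypothesis to a Lipschitz boundary, an alternative is to build a bounded extension operator $E \colon W^{m,p}(\Omega) \to W^{m,p}(\R^4)$ in the spirit of Calder\'on and Stein and apply the whole-space imbedding directly; under the bare cone condition, however, the intrinsic route is the natural one.

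The analytic core is Sobolev's integral representation. For $u \in C^\infty(\overline{C})$ and $x$ the vertex of $C$, Taylor's formula with integral remainder, averaged over the cone, produces a pointwise bound
\begin{equation}
|u(x)| \le K \sum_{|\alpha| \le m} \int_{C} |x-y|^{\,m-4}\, |D^\alpha u(y)|\, dy ,
\end{equation}
whose kernel $|x-y|^{\,m-4}$ is precisely the Riesz potential kernel of order $m$ in dimension four; on the bounded cone the lower-order contributions are harmless and the leading behaviour is that of the potential $I_m$ applied to the derivatives of $u$.

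I would then invoke the Hardy--Littlewood--Sobolev fractional integration theorem for the operator $f \mapsto \int_{C} |x-y|^{\,m-4} f(y)\, dy$. In the subcritical regime $mp < 4$ this operator is bounded from $L^p(C)$ into $L^{q}(C)$ with $\tfrac1q = \tfrac1p - \tfrac{m}{4}$, giving $\|u\|_{L^{q}(C)} \le K' \|u\|_{W^{m,p}(C)}$ with $q = 4p/(4-mp)$; interpolating this bound with the trivial estimate $\|u\|_{L^{p}} \le \|u\|_{W^{m,p}}$ fills in the whole interval $p \le q \le 4p/(4-mp)$ and yields (i). In the critical regime $mp = 4$ the borderline case of the same theorem gives boundedness into $L^{q}(C)$ for every finite $q \ge p$, which is (ii). A density argument passes both inequalities from $C^\infty(\overline{C})$ to all of $W^{m,p}(C)$, and summing over the bounded-overlap cone covering of $\Omega$ (using the additivity of the $q$-th powers of the $L^q$ norms together with the overlap bound) delivers the global imbeddings $W^{m,p}(\Omega) \hookrightarrow L^{q}(\Omega)$.

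The main obstacle is the sharp mapping property of the Riesz potential: it is the Hardy--Littlewood--Sobolev inequality, and especially its borderline case $mp = 4$, that forces the conclusion ``all finite $q$'' in (ii) and pins down the sharp exponent $4p/(4-mp)$ in (i). Establishing the integral representation with the correct kernel and organising the cone covering with controlled multiplicity are the remaining technical points, but both are routine once the uniformity furnished by the cone condition is exploited.
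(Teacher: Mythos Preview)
The paper does not prove this lemma at all: it is stated with the citation ``See \cite{Brezis}'' and used as a black box, so there is no proof in the paper to compare against. Your proposal therefore supplies strictly more than the paper does.

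As to the content of your sketch, the strategy is the classical one (closer in spirit to Adams--Fournier than to the reference \cite{Brezis} the paper actually cites): Sobolev's integral representation on a cone, reduction to a Riesz potential of order $m$, Hardy--Littlewood--Sobolev for the subcritical exponent, and a bounded-overlap covering to globalise. This is correct in outline and would yield (i) with the sharp exponent $4p/(4-mp)$ (the ``$kp$'' in the paper's statement is evidently a typo for $4p$). One small imprecision: in the critical case $mp=4$ it is not the ``borderline case'' of Hardy--Littlewood--Sobolev that gives every finite $q$ --- HLS famously fails at the endpoint $q=\infty$. What actually works on the bounded cone $C$ is Young's inequality: the kernel $|x-y|^{m-4}$ lies in $L^r(C)$ for every $r<4/(4-m)$, and convolution with an $L^r$ kernel maps $L^p$ into $L^q$ whenever $1/q \ge 1/p + 1/r - 1$; when $mp=4$ this lower bound tends to $0$, so every finite $q$ is admissible. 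With that correction the argument goes through.
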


\section{Existence and Uniqueness}

Let $\Omega \subset \R^4$ be a bounded smooth domain. Without loss of generality, we assume that $\Omega \subset (\R^+)^4$ and its border is tangent to the axis $x_i$. Consider the following system:
\begin{align}
	& u_t+(u\cdot \nabla)u - \nu \Delta u + \nabla p = 0,\;\; \mbox{in} \;\; \Omega\times (0,t),\label{NS1}\\
	&\nabla \cdot u=0 \;\;\mbox{in}\;\; \Omega,\;\;\;u|_{\partial \Omega}=0\label{NS2},\\
	&u(x,0)=u_0(x)\label{NS3},
\end{align}

\noi where $u_0(x) \in  V$. Let $L_i$ be positive numbers for $i= 1,\dots,4,$ where $D = \{ x = (x_1,x_2,x_3,x_4); 0 < x_i < L_i\}$ is the minimal parallelepiped which contains $\Omega$.  

\begin{defn}\label{defsol}A vector function $u(x,t)$ is a regular solution of (\ref{NS1})-(\ref{NS3}) such that
	
	\begin{equation}\label{SS1}u \in L^\infty(\R^+;V)\cap L^2(\R^+;H^2(\Omega)),\end{equation}
	\begin{equation}\label{SS2}u_t \in L^2(\R^+; L^2(\Omega)),\end{equation}
	$$\nabla \cdot u = 0, \, \, \, \, u|_{\partial \Omega} = 0, \,\,\,\,  \, \, \, \, u(x,0) = u_0(x)$$	
	
\noi satisfying the following identity:

\begin{equation}\label{generalized}\int_{\R^+}\int_{\Omega} \Big( u_t  - \nu \Delta u + (u\cdot \nabla)u \Big)\phi\, dxdt =0\end{equation}

\noi for every $\phi \in V$.

\end{defn}

A solution in Definition \ref{defsol} is equivalent to a solution of the variational problem (see \cite{Temam}), i.e.

\begin{align}\label{V-1}&u_t + Au + Bu = 0 \text{ in } (0,t), t > 0;\\
		&u(0) = u_0,
	\end{align}

\noi where $A=-P\Delta$ is an operator from $V$ to $V^{\prime}$ and $P$ is the orthogonal projection on $H$ in $L^2(\Omega)^4$\; such that $$(Au,v) = \nu((u,v))\;\;\text{and}\;B(v,w)=P[(v \cdot\nabla)w]$$ 
defined for $u,v,w\in V$ and $H$- valued. Moreover,
$$b(u,v,w)=(B(u,v),w),\;\;B(u)=B(u,u).$$
Operators $A$ and $B$ may be extended by continuity to linear (bilinear) operators from $V$ (respectively \;$V\times V$) into the dual $V^{\prime}\supset H\supset V$ of $V,$ ( see \cite{Fo3}.)
\begin{equation}\label{defBU}\langle B(u),v\rangle = b(u,u,v),\end{equation}

\noi where $b(u,v,w)$ is a trilinear form defined as, see \cite{Temam},

$$b(u,v,w) = \sum_{i,j=1}^4 \int_\Omega u_i(D_i v_j)w_j \,dx.$$

The stationary problem

\begin{equation}
	\nu((u,v)) = (f,v), \, \, \forall v \in V, 
\end{equation}

\noi with $f \in L^2(\Omega)$ has a unique solution (See \cite{Temam} P. 23). One can check that the mapping $\Lambda$ such that

$$(\Lambda f, v) = \nu ((u,v)),$$

\noi is a self-adjoint  linear operator. Hence  there exists a sequence of eigenfunctions $w_i$ and eigenvalues $\lambda_i$ such thats $\Lambda w_j = \lambda_j w_j$, 
$$((w_j,v)) = \lambda_j (w_j, v), \forall v \in V$$

\noi and $w_j \in V$, $j \geq 1$, $\lambda_j > 0$ and $\lambda_j \to \infty$ as $j \to \infty$.  See  \cite{Lions}, Chapter I, $\S$ 6.3; \cite{Temam}, Chapter III, Lemma 3.7.

\begin{thm}\label{T3-1}Let $u_0 \in  V$ such that
	
	\begin{equation}\label{hyp}  \nu -max(9,3C_1(\Omega)) \| u_0\|_V > 0,\end{equation}

then there exists a unique regular solution $u(x,t)$   to (\ref{NS1})-(\ref{NS3}) in the sense of Definition \ref{defsol} . Moreover,

\begin{align}&\| u \|^2(t) \leq \| u_0 \|^2 e^{-2\nu\chi t},\label{T1-R1}\\
	&\| u \|_V^2(t) \leq \| u_0 \|_V^2 e^{-\nu\chi t}. \label{T1-R2}
\end{align}

\noi where $\chi = \sum_{i = 1}^4 \Big(\frac{\pi^2}{L_i^2}\Big)$.

	\end{thm}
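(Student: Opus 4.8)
The plan is to use the Faedo--Galerkin method with the special basis $\{w_j\}$ of eigenfunctions of the Stokes operator introduced just before the theorem. First I would fix $m$ and seek an approximate solution $u^m(t)=\sum_{j=1}^m g_{jm}(t)w_j$ satisfying the projected system $(u^m_t,w_j)+\nu((u^m,w_j))+b(u^m,u^m,w_j)=0$ for $1\le j\le m$, with $u^m(0)$ the projection of $u_0$ onto $\mathrm{span}\{w_1,\dots,w_m\}$. Local solvability of this ODE system is standard, and the global existence follows once the a priori estimates below are in hand.

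The heart of the argument is two energy estimates, performed on the Galerkin level and passed to the limit. For the $L^2$-estimate I would multiply by $g_{jm}$ and sum, so that $\frac12\frac{d}{dt}\|u^m\|^2+\nu\|\nabla u^m\|^2=0$, using the antisymmetry $b(u,u,u)=0$. Applying Steklov's inequality (Lemma \ref{steklov}) in each coordinate direction on the minimal parallelepiped $D\supset\Omega$ gives $\|\nabla u^m\|^2\ge\chi\|u^m\|^2$ with $\chi=\sum_{i=1}^4\pi^2/L_i^2$, whence $\frac{d}{dt}\|u^m\|^2\le-2\nu\chi\|u^m\|^2$ and Gronwall yields \eqref{T1-R1}. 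For the $V$-estimate I would instead multiply by $\lambda_j g_{jm}$ (equivalently test with $Au^m$), obtaining
\begin{equation}
\frac12\frac{d}{dt}\|u^m\|_V^2+\nu\|Au^m\|^2=-b(u^m,u^m,Au^m).
\end{equation}
Here I would estimate the trilinear term via Lemma \ref{lemmabuvw} together with the Sobolev embedding of Lemma \ref{lemma3.3}, and crucially use the decay \eqref{T1-R1}--\eqref{T1-R2} together with hypothesis \eqref{hyp} to absorb the nonlinear term into the dissipation. The smallness condition $\nu-\max(9,3C_1(\Omega))\|u_0\|_V>0$ is precisely what guarantees that the factor multiplying $\|u^m\|_V^2$ in front of the nonlinearity stays below $\nu$ uniformly in time, so that the coefficient of $\|Au^m\|^2$ remains positive and, after applying Steklov once more, one gets $\frac{d}{dt}\|u^m\|_V^2\le-\nu\chi\|u^m\|_V^2$, giving \eqref{T1-R2}.

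With both estimates uniform in $m$, the bounds \eqref{SS1}--\eqref{SS2} follow: $u^m$ is bounded in $L^\infty(\R^+;V)$, and the $\nu\|Au^m\|^2$ term integrated in time yields the $L^2(\R^+;H^2(\Omega))$ bound via the Cattabriga regularity estimate (Lemma \ref{Cattabriga}) applied to the Stokes problem with right-hand side $f=-u^m_t-B(u^m)$; the equation then gives the $L^2(\R^+;L^2(\Omega))$ bound on $u^m_t$. I would then extract a subsequence converging weakly-$*$ in these spaces and strongly in $L^2_{\mathrm{loc}}(\R^+;H)$ by Aubin--Lions compactness, which is enough to pass to the limit in the nonlinear term and recover \eqref{generalized}. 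Uniqueness follows by taking the difference $w=u-\tilde u$ of two solutions, testing with $w$, and controlling $b(w,u,w)$ again through Lemmas \ref{lemma3.3} and \ref{lemmabuvw} under the same smallness hypothesis, so that a Gronwall argument forces $w\equiv0$.

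The main obstacle I anticipate is the $V$-level estimate, specifically closing the nonlinear term $b(u^m,u^m,Au^m)$. In four dimensions this trilinear form is genuinely critical: the naive bound using Lemma \ref{lemmabuvw} produces a factor $\|u^m\|_V$ multiplying $\|Au^m\|\,\|u^m\|_V$, and one must carefully track constants so that the smallness condition \eqref{hyp}, combined with the already-established decay of $\|u^m\|_V(t)$, keeps this term strictly dominated by $\nu\|Au^m\|^2$ for all $t$. The choice of the eigenbasis of the Stokes operator (rather than the Laplacian) is what makes $Au^m$ an admissible test function and lets the dissipation control the $H^2$-norm via Lemma \ref{Cattabriga}; getting this interplay right, uniformly in time, is the delicate point of the proof.
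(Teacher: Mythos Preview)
Your overall plan matches the paper's proof almost step for step: Galerkin approximation in the Stokes eigenbasis, the $L^2$-estimate via antisymmetry and Steklov on the circumscribed box, the $V$-estimate by testing with $Au^m$, weak compactness to pass to the limit, and uniqueness by subtracting two solutions. The one place where your account slips is the bound on $b(u^m,u^m,Au^m)$.

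Lemma~\ref{lemmabuvw} is not the right tool here: its conclusion $|b(u,v,w)|\le 9\|u\|_V\|v\|_V\|w\|_V$ would put $\|Au^m\|_V$ on the right, an $H^3$-type quantity you have no control over at this stage. And the form you actually write, ``a factor $\|u^m\|_V$ multiplying $\|Au^m\|\,\|u^m\|_V$'', gives $\|u^m\|_V^{2}\|Au^m\|$, which cannot be absorbed into $\nu\|Au^m\|^{2}$ under hypothesis~\eqref{hyp}. What the paper does instead is the direct H\"older split $L^4\times L^4\times L^2$:
\[
|b(u^m,u^m,Au^m)|\le \|u^m\|_{L^4}\,\|\nabla u^m\|_{L^4}\,\|Au^m\|
\le 3\,\|u^m\|_V\cdot C_1(\Omega)\,\|Au^m\|\cdot\|Au^m\|,
\]
where the crucial step is $\|\nabla u^m\|_{L^4}\le C_1(\Omega)\|\nabla u^m\|_{H^1}\le C_1(\Omega)\|Au^m\|$, using the \emph{second} inequality of Lemma~\ref{lemma3.3} because $\nabla u^m\in H^1(\Omega)$ does not vanish on $\partial\Omega$. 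This yields $|b|\le 3C_1(\Omega)\,\|u^m\|_V\,\|Au^m\|^{2}$, which is exactly the structure needed to absorb into $\nu\|Au^m\|^{2}$ and then apply $\chi\|u^m\|_V^{2}\le\|Au^m\|^{2}$. It also explains the form of the hypothesis: the constant $3C_1(\Omega)$ enters in this $V$-level estimate, while the constant $9$ from Lemma~\ref{lemmabuvw} is what is actually used in the uniqueness step, hence the $\max(9,3C_1(\Omega))$ in~\eqref{hyp}.
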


\begin{proof}Here we follow the scheme proposed in \cite{Lions, Temam}.  Let $\{ w_i \}$ be the eigenfunctions defined above. These functions span the set $V$ and $((w_j, w_k)) = \lambda_j \delta_{jk}$ for all $j, k \in \N$. 
	We have
	\begin{equation}\label{SB3}\lambda_j(v,w_j) = (v, Aw_j) = ((v,w_j)).\end{equation}
	
The next steps follow  the Faedo-Galerkin's method. 

For $m \in \N,$ let $V_m$ be the finite dimensional linear span of $\{w_i\}$. Define the function

\begin{equation}\label{umbygm}u_m(x,t) = \sum_{i=1}^m g_{im} (t) w_i,\end{equation}

\noi where $g_{im}$ is the unique solution of the approximate problem:

\begin{equation}\label{app1}\begin{cases}(u_m'(t), w) + \nu ((u_m(t), w)) + b(u_m(t), u_m(t), w) = 0,
		\\	u_m(0) = u_{0m}
		\text{ for } t \in [0,T].\\
\end{cases}\end{equation}

\noi Here $w \in V_m$ and $u_{0m}$ is the Gramm-Schmidt projection of $u_0$ onto the space $ V_m$.

	{\bf Estimate I:} Multiply (3.16) by $u_m$ and integrate over $\Omega$	
	\begin{equation}\label{3-0001}(u_m', u_m)(t) + (Au_m, u_m)(t) = 0\end{equation}
\noi to obtain
	\begin{align}\label{3-001}\frac{d}{dt} \|u_m \|^2(t) + 2\nu \| u_m \|_V^2(t) = 0.\end{align}
Define $\tilde{u_m}(x,t)$ an extension of $u_m(x,t)$ to the domain $D$ as  follows
\begin{equation}\label{3-000001}\tilde{u_m}(x,t) = \begin{cases}
		u_m(x,t) \text{ if } x \in \Omega,\\
		0 \text{ if } x \in D / \Omega.\end{cases}\end{equation}
	By Lemma \ref{steklov}, 
	\begin{equation}\label{3-01}\sum_{i=1}^4 \Big(\frac{\pi^2}{L_i^2}\Big)\| u_m \|^2 = \sum_{i=1}^4 \Big(\frac{\pi^2}{L_i^2}\Big) \| \tilde{u_m} \|^2 \leq \| \tilde{u_m} \|_V^2 = \| u_m \|_V^2.\end{equation}
	Therefore, \eqref{3-01} and \eqref{3-001} provide
\begin{align}\label{3-1}	\frac{d}{dt} \| u_m \|^2(t) + 2\nu\chi \|u_m\|^2(t) \leq 0,\end{align}
\noi where $\chi = \sum_{i=1}^4 \frac{\pi^2}{L_i^2}$.  Integrating \eqref{3-1}, we obtain
\begin{align}\label{3-2}\|u_m\|^2(t) \leq \|u_0\|^2 e^{-2\nu\chi t}.\end{align}
	Integrating \eqref{3-001} in $\R^+$, we get
\begin{align}\label{3-02}2\nu \int_0^\infty \| u_m \|_V^2(t) \, dt \leq \| u_0\|^2.\end{align}

Clearly, the estimates \eqref{3-2} and \eqref{3-02} hold because $\|u_{0m} \| \leq \|u_0\|$. 
In summary, the estimates \eqref{3-2} and \eqref{3-02} mean that 

\begin{equation}\label{bound1}
	u_m \text{ is bounded in } L^\infty(\R^+, H)\cap L^2(\R^+, V). 
\end{equation}

{\bf Estimate II:} Choose $w = w_j$ in (3.16) and multiply by $\lambda_j$ to get
\begin{equation}\label{SB2} ((u_m', w_j)) + \nu (Au_m, Aw_j) + (Bu_m, Aw_j) = 0.\end{equation}. 

Sum up for $j=1,\dots,m$ to obtain

\begin{equation}\label{SB0}
	((u_m',u_m))+ \nu (Au_m,Au_m) + (Bu_m, Au_m) = 0
\end{equation}
\noi and consequently,
\begin{align}
	\frac{1}{2} \dfrac{d}{dt} \|u_m\|_V^2+\nu\|Au_m\|^2+b(u_m,u_m,Au_m)=0.
\end{align}

We estimate 

\begin{align}| b(u_m,u_m,Au_m) | \leq \| Au_m \| \| u_m \|_{L^4(\Omega)} \| \nabla u_m \|_{L^4(\Omega)}.
\end{align}

Using (2.3) from  Lemma \eqref{lemma3.3}, we get  $$\| \nabla u_m \|_{L^4(\Omega)} \leq C_1(\Omega) \| \nabla u_m \|_{V} \leq  C_1(\Omega)\| Au_m \|,$$ and finally,
\begin{align}| b(u_m,u_m,Au_m) | \leq 3C_1 \| u_m \|_V(t) \| Au_m \|^2(t). 
	\end{align}

Hence

\begin{align}\frac{d}{dt} \| u_m \|_V^2(t) + \nu \| Au_m\|^2(t) + \Big(\nu - 3C_1\| u \|_V(t)\Big) \| Au\|^2(t) \leq 0.
	\end{align}

By \eqref{hyp}, it follows, see \cite {Temam}, Ch. III, p. 305,
\begin{align}\label{n01}\frac{d}{dt} \| u_m \|_V^2(t) + \nu \| Au_m \|^2(t) \leq 0, \forall t > 0.
	\end{align} 

Integrating \eqref{n01}, we get

\begin{align}\label{3-6}\| u_m\|_V^2(t) + \nu \int_0^t \| A u_m \|^2(s)\, ds \leq \| u_0 \|_V^2.
	\end{align}

On the other hand, 
\begin{align}\chi\|u_m\|^2(t)\leq\| u_m \|_V^2(t)  = (Au_m,u_m)(t) \leq \|Au_m\|(t) \|u_m\|(t).
\end{align}

Hence,

\begin{align}\chi\|u_m\|(t) \leq \|Au_m\|(t) 
\end{align}
and 

\begin{align} \| u_m \|_V^2(t) \leq \frac{1}{\chi}\|Au_m\|^2(t). 
\end{align}

This reduces (3.29) to the form

\begin{align}\frac{d}{dt} \| u_m\|_V^2(t) +\nu\chi \| u_m \|_V^2(t) \leq 0,
	\end{align}

Hence
\begin{equation}\label{es1}
	\| u \|_V^2(t) \leq \| u_0 \|_V^2 e^{-\nu\chi t}.
\end{equation}
Note that \eqref{3-6} implies that $Au \in L^2(\R^+; L^2(\Omega))$. Then, for a non-singular $t_0 \in \R^+$ we calculate 

\begin{align} \| \Delta u \|(t_0)  \leq C\|Au\|(t_0)
\end{align}

and  conclude that $u \in L^2(\R^+; H^2(\Omega))$. By Lemma \ref{lemma3.3}, we  estimate

\begin{align}\| Bu \|(t_0) \leq \| u \|_{H^2(\Omega)}(t_0).
	\end{align}

\noi It follows from  \eqref{V-1}
\begin{equation}\label{bound2}
	\frac{\partial}{\partial t}u_m (t)\text{ is bounded in }L^2(\R^+;L^2(\Omega)).
	\end{equation}

{\bf Passage to the Limit.} Since the estimates for \eqref{bound1} and \eqref{bound2} do not depend on $m$ or $t$, there exists a subsequence $u_m(x,t)$ which converges weakly to a function $u(x,t)$ in their spaces. To prove that such  $u(x,y)$ solves (\ref{NS1})-(\ref{NS3}), we use the definition of weakly convergence :\\

since $\dfrac{\partial}{\partial t} u_{m}$ is bounded in $L^2 (\R^+; L^2(\Omega))$, then
\begin{equation}\label{conv1}
\int_{\R^{+}}	\int_D u_{mt} \phi \, dx \, dt \to \int_{\R^{+}} \int_D u_t \phi \, dx\, dt.
\end{equation}

Since $u_m$ is bounded in $L^\infty(\R^+; V),$ then for $a.e.\;t>0$
\begin{equation}\label{conv2}
	\int_D \nu u_{mx_i} \phi_{x_i} \, dx \to \int_D \nu u_{x_i} \phi_{x_i}\, dx
\end{equation}
\noi and
\begin{equation}\label{conv3}
	\int_D u_{mi} u \phi_{x_i} \, dx \to \int_D u_i u \phi_{x_i} \, dx.
\end{equation}

By \eqref{conv1}, \eqref{conv2} and \eqref{conv3}, we conclude that $u(x,t)$ satifies \eqref{defsol}.
This proves the existence part of Theorem 3.2.

\begin{lemma}\label{uniq}[Uniqueness] There is at maximum one solution $u(x,t)$ of \eqref{NS1}-\eqref{NS3} such that
	$$u \in L^\infty(\R^+; V).$$
\label{uniquenesslemma}\end{lemma}
	
\begin{proof}	
	 Suppose that $u_1$ and $u_2$ are two different solutions such as obtained in the class \eqref{SS1}. We write $u = u_1 - u_2$, which solves 
\begin{align}
	& u_t+ Au + B(u_1) - B(u_2) = 0,\;\; \mbox{in} \;\; \Omega\times (0,t),\label{NS01}\\
	&u(x,0)=0 \label{NS02},
\end{align}

Multipying (\ref{NS01})-(\ref{NS02}) by $u$, we obtain

\begin{equation}\label{uni1}
	\frac{d}{dt} \| u \|^2 (t) + 2\nu \| u \|_V^2 (t) = 2b(u_2,u_2, u) - 2b(u_1, u_1,u),
\end{equation}
Since
$$b(u,v,w) = \sum_{i,j=1}^4 \int_\Omega u_i(D_i v_j)w_j \,dx,$$
then $b(u_i,u_j,u_j,)=0, \; i,j=1, 2.$ One can check that

\begin{align*}2b(u_2,u_2, u) - 2b(u_1, u_1,u) = 2b(u_2, u, u_2)  -2 b(u_1, u, u_1)\\
	= 2b(u_2,u, u_2) -2b(u_1, u,u_1+u_2-u_2) =-2 b(u,u, u_2).
	\end{align*}

Making use of the H\"{o}lder inequality and Lemma 2.3, (3.46) becomes

\begin{align}
	\frac{d}{dt} \| u \|^2 (t) + 2\nu \| u \|_V^2 (t) = -2b(u,u,u_2)\notag \\
	\leq \| u \|_{L^4(\Omega)} \| u \|_V \| u_2 \|_{L^4(\Omega)} \leq 9\| u \|_V^2  \|\nabla  u _2\| .
\end{align}
This implies
$$	\frac{d}{dt} \| u \|^2 (t) + 2[\nu -9\| u _2\|_V(t)]\|u\|_{V}^2(t)\leq 0.$$ 
Condition (3.11) guarantees,   see \cite {Temam}, Ch. III, p. 305, that

\begin{equation}\frac{d}{dt} \| u \|^2(t) \leq 0 . 
\end{equation}

\noi This implies

\begin{equation}
	\| u \|(t)^2 \equiv 0
\end{equation}

\noi and $u_1 = u_2$ what proves Lemma \ref{uniquenesslemma}.
\end{proof}

The existence part and Lemma \ref{uniquenesslemma} prove Theorem 3.2. 
\end{proof}

{\bf More Regularity} 

\begin{thm}
	Under the conditions of Theorem \ref{T3-1}, if additionally $u_0 \in H^2(\Omega)$ and 
	
	\begin{equation}
		\label{hyp2}\nu - 9\| u_0 \|_V \geq 0,
	\end{equation}
	
\noi then
	
	 \begin{align}\label{T2-R}
	 u &\in L^\infty (\R^+; H^2(\Omega))\cap L^2(\R^+; H^3(\Omega)),\\
	 	u_t &\in L^\infty (\R^+; L^2(\Omega))\cap L^2(\R^+; H^1(\Omega)).
	 \end{align}
\end{thm}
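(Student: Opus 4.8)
The plan is to upgrade the regularity from the $H^1$-level estimates of Theorem \ref{T3-1} to the $H^2$-level by differentiating the equation in time and running a Faedo--Galerkin energy estimate on $u_t$. First I would return to the Galerkin approximations $u_m$ from the previous proof, which already satisfy the uniform bounds \eqref{bound1}, \eqref{3-6} and \eqref{bound2}; the additional hypothesis $u_0 \in H^2(\Omega)$ together with \eqref{hyp2} is precisely what is needed to control the new nonlinear terms. The key new estimate is obtained by differentiating the approximate system \eqref{app1} in $t$, testing with $u_m'(t)$, and studying the evolution of $\|u_m'\|^2(t)$.

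The central computation proceeds as follows. Differentiating \eqref{app1} in time and choosing $w=u_m'$ gives an identity of the schematic form
\begin{equation}
\frac{1}{2}\frac{d}{dt}\|u_m'\|^2(t) + \nu\|u_m'\|_V^2(t) + b(u_m',u_m,u_m') + b(u_m,u_m',u_m') = 0.
\end{equation}
The last term vanishes by the antisymmetry property $b(v,w,w)=0$ used already in Lemma \ref{uniquenesslemma}, so only $b(u_m',u_m,u_m')$ must be controlled. Using Lemma \ref{lemmabuvw} (or the H\"older estimate with Lemma \ref{lemma3.3} that underlies it) I would bound $|b(u_m',u_m,u_m')| \leq 9\|u_m'\|_V^2\,\|u_m\|_V$, which yields
\begin{equation}
\frac{1}{2}\frac{d}{dt}\|u_m'\|^2(t) + \bigl(\nu - 9\|u_m\|_V(t)\bigr)\|u_m'\|_V^2(t) \leq 0.
\end{equation}
By \eqref{es1} we have $\|u_m\|_V(t)\leq\|u_0\|_V$, so the hypothesis \eqref{hyp2} makes the coefficient $\nu-9\|u_m\|_V(t)$ nonnegative for all $t$. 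Combined with the Steklov-type coercivity $\chi\|u_m'\|^2\leq\|u_m'\|_V^2$ from \eqref{3-01}, this gives both $u_m'\in L^\infty(\R^+;L^2(\Omega))$ and $u_m'\in L^2(\R^+;V)=L^2(\R^+;H^1(\Omega))$ uniformly in $m$, which is the second line of \eqref{T2-R}.

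To recover the spatial regularity $u\in L^\infty(\R^+;H^2)\cap L^2(\R^+;H^3)$ I would read the equation as a stationary Stokes problem at each fixed time, writing $-\nu\Delta u+\nabla p = -u_t-(u\cdot\nabla)u =: f$. The bound on $u_t$ just obtained places $f\in L^\infty(\R^+;L^2)$ once the convection term is controlled, so Lemma \ref{Cattabriga} (with $\alpha=2$) upgrades $u$ to $H^2$ uniformly in time. Bootstrapping once more: from $u\in H^2\hookrightarrow L^\infty$ (Sobolev embedding in $\R^4$) and $u_t\in L^2(\R^+;H^1)$ one checks $f\in L^2(\R^+;H^1)$, and then Lemma 2.5 (the higher-order Stokes regularity with $m=1$) gives $u\in L^2(\R^+;H^3)$, completing \eqref{T2-R}.

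I expect the main obstacle to be the initialization and uniform control of $\|u_m'(0)\|$. To start the $u_t$-energy estimate one needs $\|u_m'(0)\|$ bounded independently of $m$; this is extracted by evaluating \eqref{app1} at $t=0$, which expresses $u_m'(0)$ in terms of $Au_{0m}$ and $B(u_{0m})$, and then invoking the hypothesis $u_0\in H^2(\Omega)$ to bound $\|Au_{0m}\|\leq\|u_0\|_{H^2}$ and $\|B(u_{0m})\|$ via Lemma \ref{lemma3.3}. The delicate point is that the Galerkin basis $\{w_j\}$ consists of Stokes eigenfunctions, so the projection $u_{0m}$ interacts well with $A$ but one must verify the estimate on the nonlinear term is uniform; this is where the criticality of dimension four (the constant $3$ in Lemma \ref{lemma3.3}, and the borderline Sobolev embedding $H^2\hookrightarrow L^\infty$ when $n=4$) requires care. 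Once these $m$-independent bounds are in hand, passing to the limit is routine by the same weak-compactness argument as in the existence proof.
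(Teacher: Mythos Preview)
Your approach is essentially the paper's: differentiate in $t$, run an energy estimate on $u_t$ using the antisymmetry $b(v,w,w)=0$ and the bound $|b(u_t,u,u_t)|\le 9\|u_t\|_V^2\|u\|_V$, absorb the trilinear term via \eqref{hyp2} and the monotonicity $\|u\|_V(t)\le\|u_0\|_V$, control $\|u_t(0)\|$ from $u_0\in H^2(\Omega)$, and then bootstrap through the stationary Stokes problem with Lemmas \ref{Cattabriga} and 2.5. The only structural difference is that you carry everything at the Galerkin level $u_m$ and pass to the limit at the end, whereas the paper performs the computation directly on the limit $u$; your version is the more rigorous one and is what the paper's formal calculation is implicitly justifying.

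One correction is needed in your bootstrap step. You write ``from $u\in H^2\hookrightarrow L^\infty$ (Sobolev embedding in $\R^4$)'', but in dimension four this embedding fails: $mp=2\cdot 2=4$ is exactly the critical case, so $H^2(\Omega)\hookrightarrow L^q(\Omega)$ only for finite $q$ (cf.\ Lemma 2.6(ii)). This is not fatal---the paper handles the term $\|uD^2u\|$ by H\"older with exponents giving $\|u\|_{L^6}\|D^2u\|_{L^3}$ and then uses $H^2\hookrightarrow L^q$ for large finite $q$ together with an $L^3$ Stokes estimate---but you cannot simply invoke $L^\infty$. Replace that step by a H\"older splitting with finite exponents and the argument goes through; you already flagged the criticality of $n=4$ yourself, so this is precisely the place where that care must be exercised.
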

\begin{proof} 
Derivate \eqref{NS1} with respect  to $t$, multiply by $u_t$ and integrate over $\Omega$ to obtain 

\begin{equation}
	\frac{d}{dt} \| u_t \|^2(t) + \nu \| u_t \|_V^2(t) + b(u_t,u,u_t) = 0.
\end{equation}

Making use of the H\"{o}lder inequalities and Lemma 2.2, it can be rewritten as

\begin{align}
\dfrac{d}{dt}	\| u_t \|^2(t) + \nu \| u_t \|_V^2(t) \leq | b(u_t,u,u_t) | (t)	\leq 9 \| u_t \|_V^2(t) \| u \|_V(t).
	\end{align}
 
This implies
\begin{equation}
\frac{d}{dt} \| u_t\|^2(t) +  \Big(2\nu - 9 \| u\|_V(t)\Big) \| u_t \|_V(t)^2 \leq 0.
\end{equation}

By (3.11) and \eqref{hyp2},
\begin{align}\frac{d}{dt} \| u_t \|^2(t)  + \nu \| u_t \|_V^2 \leq 0,
	\end{align}

Using Steklov's inequality, integrating over $(0,t)$, we obtain

\begin{equation}\label{T2-05}
	\| u_t \|^2(t) + \int_0^t \nu\|u_t\|_V^2 \leq \| u_t\|^2(0).
\end{equation}

It remains to show that $\| u_t \|(0)$ as a limit, is bounded. Multiplying \eqref{NS1} by $u_t$ and integrating over $\Omega$, we get

\begin{equation}
	\| u_t\|^2 + (Au, u_t) + B(u,u,u_t) = 0, 
\end{equation}
then 
\begin{equation}\label{T2-03}
	\| u_t \|(0) \leq \| Au_0 \| + \| Bu_0 \|,
\end{equation}

and since $u_0 \in H^2(\Omega)$, $u_t(0)$ belongs to $L^2(\Omega)$ and by \eqref{T2-05}, we have $u_t \in L^2(\R^+; H^1(\Omega))$.

Let $u(x,y)$ be a solution obtained in Theorem \eqref{3-1}, write \eqref{NS1} as \eqref{V-1}, 
 
$$Au = -u_t - Bu.$$
Making use of (3.57) and Theorem 3.2, it can be seen that $u_t+Bu \in L^{\infty}(\R^+; L^2(\Omega)).$ Hence, by Lemma 2.4, 
\begin{equation}
	u\in L^{\infty}(\R^+;H^2(\Omega)).\end{equation}
Note that, by H\"{o}lder's inequality,
\begin{align} \label{T2-07} \| (u\cdot \nabla ) u \|_{H^1(\Omega)} \leq \| (u\cdot \nabla )u \| + \| (\nabla u)^2 \| + \| u D^2 u \|\notag \\
	\leq C \| u \| \| \nabla u \| + \| \nabla u \|_{L^4(\Omega)}^2 + \| u \|_{L^6(\Omega)}  \| D^2 u \|_{L^{3}(\Omega)} 
\end{align}

By Lemma \ref{lemma3.3}, the first two terms are estimated by $\| u \|_{H^2(\Omega)}$ such that is bounded in $L^2(\R^+)$, and it remains to show that for every non-singular $t_0$, $\| u \|_{L^6(\Omega)}(t_0) \|D^2 u \|_{L^3(\Omega)}(t_0)$ is bounded.

\noi We have
\begin{align}| \langle Bu,v\rangle |(t_0)  \leq \| Du \|_{L^4(\Omega)}(t_0)  \| u \|_{L^{12}(\Omega)}(t_0)  \| v \|_{L^{3/2}(\Omega)}(t_0).  \end{align}

\noi This implies

\begin{align}\label{T2-06} \| Bu \|_{L^3(\Omega)}(t_0)  \leq \|Du \|_{L^4(\Omega)}(t_0)  \| u \|_{L^{12}(\Omega)}(t_0)  
	\end{align}

By Sobolev embeddings, 
\begin{align}\| u \|_{L^{12}(\Omega)}(t_0) \leq \| u \|_{H^2(\Omega)}(t_0),
\end{align}

Jointly (3.63) and (3.64)  give
\begin{align}\| (u \cdot \nabla ) u \|(t_0) = \| Bu \|_{L^3(\Omega)}(t_0) \leq C \| u \|_{H^2(\Omega)}(t_0).
	\end{align}

Concluding the estimate in \eqref{T2-07} with \eqref{T2-06}, we obtain
\begin{equation}
	\| Bu \|_{H^1(\Omega)}(t_0)= \| (u\cdot \nabla) u \|_{H^1(\Omega)}(t_0) \leq C \| u \|_{H^2(\Omega)}^2(t_0).
\end{equation}

Hence, for every non-singular $t_0 \in \R+$, the right-hand side of \eqref{V-1} belongs to $L^2(\R^+; H^1(\Omega))$ and we obtain that $Au \in L^2(\R^+; H^1(\Omega))$. By Lemma 2.5, we  get $u \in L^2(\R^+; H^3(\Omega))$. This, (3.60) and (3.57) complete the proof of Theorem 3.3.
\end{proof}
\medskip

{ \bf Conclusions }

We have considered an initial-boundary value problem for the 4D Navier-Stokes equations with restrictions on the initial data. Existence, uniqueness and exponential decay of regular solutions were established. For more regular initial data (for example, $H^2(\Omega)),$ it was possible to prove more more regularity of global solutions. 

\medskip
\medskip

\bibliographystyle{torresmo}

\end{document}